\numberwithin{equation}{section}
\newcommand*{\mint}[1]{%
  \mint@l{#1}{}%
}
\newcommand*{\mint@l}[2]{%
  \@ifnextchar\limits{%
    \mint@l{#1}%
  }{%
    \@ifnextchar\nolimits{%
      \mint@l{#1}%
    }{%
      \@ifnextchar\displaylimits{%
        \mint@l{#1}%
      }{%
        \mint@s{#2}{#1}%
      }%
    }%
  }%
}
\newcommand*{\mint@s}[2]{%
  \@ifnextchar_{%
    \mint@sub{#1}{#2}%
  }{%
    \@ifnextchar^{%
      \mint@sup{#1}{#2}%
    }{%
      \mint@{#1}{#2}{}{}%
    }%
  }%
}
\def\mint@sub#1#2_#3{%
  \@ifnextchar^{%
    \mint@sub@sup{#1}{#2}{#3}%
  }{%
    \mint@{#1}{#2}{#3}{}%
  }%
}
\def\mint@sup#1#2^#3{%
  \@ifnextchar_{%
    \mint@sup@sub{#1}{#2}{#3}%
  }{%
    \mint@{#1}{#2}{}{#3}%
  }%
}
\def\mint@sub@sup#1#2#3^#4{%
  \mint@{#1}{#2}{#3}{#4}%
}
\def\mint@sup@sub#1#2#3_#4{%
  \mint@{#1}{#2}{#4}{#3}%
}
\newcommand*{\mint@}[4]{%
  \mathop{}%
  \mkern-\thinmuskip
  \mathchoice{%
    \mint@@{#1}{#2}{#3}{#4}%
        \displaystyle\textstyle\scriptstyle
  }{%
    \mint@@{#1}{#2}{#3}{#4}%
        \textstyle\scriptstyle\scriptstyle
  }{%
    \mint@@{#1}{#2}{#3}{#4}%
        \scriptstyle\scriptscriptstyle\scriptscriptstyle
  }{%
    \mint@@{#1}{#2}{#3}{#4}%
        \scriptscriptstyle\scriptscriptstyle\scriptscriptstyle
  }%
  \mkern-\thinmuskip
  \int#1%
  \ifx\\#3\\\else_{#3}\fi
  \ifx\\#4\\\else^{#4}\fi
}
\newcommand*{\mint@@}[7]{%
  \begingroup
    \sbox0{$#5\int\m@th$}%
    \sbox2{$#5\int_{}\m@th$}%
    \dimen2=\wd0 %
    \let\mint@limits=#1\relax
    \ifx\mint@limits\relax
      \sbox4{$#5\int_{\kern1sp}^{\kern1sp}\m@th$}%
      \ifdim\wd4>\wd2 %
        \let\mint@limits=\nolimits
      \else
        \let\mint@limits=\limits
      \fi
    \fi
    \ifx\mint@limits\displaylimits
      \ifx#5\displaystyle
        \let\mint@limits=\limits
      \fi
    \fi
    \ifx\mint@limits\limits
      \sbox0{$#7#3\m@th$}%
      \sbox2{$#7#4\m@th$}%
      \ifdim\wd0>\dimen2 %
        \dimen2=\wd0 %
      \fi
      \ifdim\wd2>\dimen2 %
        \dimen2=\wd2 %
      \fi
    \fi
    \rlap{%
      $#5%
        \vcenter{%
          \hbox to\dimen2{%
            \hss
            $#6{#2}\m@th$%
            \hss
          }%
        }%
      $%
    }%
  \endgroup
}
\newcommand{\diffto}{\xrightarrow{\raisebox{-0.2 em}[0pt][0pt]{\smash{\ensuremath{\sim}}}}}
\newcommand{\inc}{\hookrightarrow}
\newcommand{\rmap}{\longrightarrow}
\newcommand{\lmap}{\longleftarrow}
\newcommand{\acts}{\curvearrowright}
\newcommand{\hor}{\mathrm{hor}}
\newcommand{\R}{\mathbb{R}}
\newcommand{\homo}{ \ \raisebox{\depth}{\scalebox{1}[-1]{$\curvearrowright$}} \ }
\newcommand{\la}{\langle}
\newcommand{\ra}{\rangle}
\newcommand{\bd}{\partial}
\newcommand{\C}{\mathbb{C}}
\newcommand{\pr}{\operatorname{pr}}
\newcommand{\dd}{\mathrm{d}}
\newcommand{\HH}{\mathrm{H}}
\newcommand{\id}{\mathrm{id}}
\newcommand{\8}{\infty}
\newcommand{\X}{\mathfrak{X}}
\theoremstyle{definition}
\newtheorem*{main_example}{Main Example}
\newtheorem*{defi}{Definition}}
\begin{document}

\allowdisplaybreaks

\newcommand{\arXivNumber}{1805.00542}

\renewcommand{\PaperNumber}{124}

\FirstPageHeading

\ShortArticleName{Morita Invariance of Intrinsic Characteristic Classes of Lie Algebroids}

\ArticleName{Morita Invariance of Intrinsic Characteristic Classes\\ of Lie Algebroids}

\Author{Pedro FREJLICH}

\AuthorNameForHeading{P.~Frejlich}

\Address{UFRGS, Departamento de Matem\'atica Pura e Aplicada, Porto Alegre, Brasil}
\Email{\href{mailto:frejlich.math@gmail.com}{frejlich.math@gmail.com}}

\ArticleDates{Received June 18, 2018, in final form November 08, 2018; Published online November 15, 2018}

\Abstract{In this note, we prove that \emph{intrinsic characteristic classes} of Lie algebroids~-- which in degree one recover the \emph{modular} class~-- behave functorially with respect to arbitrary transverse maps, and in particular are weak Morita invariants. In the modular case, this result appeared in [Kosmann-Schwarzbach Y., Laurent-Gengoux C., Weinstein A., \textit{Transform. Groups} \textbf{13} (2008), 727--755], and with a connectivity assumption which we here show to be unnecessary, it appeared in [Crainic M., \textit{Comment. Math. Helv.} \textbf{78} (2003), 681--721] and [Ginzburg V.L., \textit{J.~Symplectic Geom.} \textbf{1} (2001), 121--169].}

\Keywords{Lie algebroids; modular class; characteristic classes; Morita equivalence}

\Classification{53D17; 57R20}

\section{Introduction}
A Lie algebroid $A$ on a manifold $M$ gives rise to {\it intrinsic characteristic classes}
\begin{gather*}
 \operatorname{char}(A) \in \mathrm{H}^{\mathrm{odd}}(A)
\end{gather*}in Lie algebroid cohomology, which obstruct the existence of a metric $g$ on the fibres of $\operatorname{Ad}(A):=A\oplus TM$, and a connection $\nabla\colon TM \acts A$, whose induced basic connection $\nabla^{\mathrm{bas}}\colon A \acts \operatorname{Ad}(A)$,
\begin{gather*}
 \nabla^{\mathrm{bas}}_a(b,u) = \big(\nabla_{\varrho_Ab}a+[a,b]_A,\varrho_A\nabla_{u}a+[\varrho_Aa,u]\big), \qquad a,b \in \Gamma(A), \qquad u \in \X(M),
\end{gather*}is $g$-metric:
\begin{gather*}
 \mathscr{L}_ag(s,s') = g\big(\nabla^{\mathrm{bas}}_as,s'\big) + g\big(s,\nabla^{\mathrm{bas}}_as'\big), \qquad s,s' \in \Gamma(\operatorname{Ad}(A)).
\end{gather*}

For example, the familiar statement that there exists a Riemannian (i.e., a torsion-free and metric) connection associated with a Riemannian metric on $M$ implies that, for tangent bundles $A=TM$, these characteristic classes~$\operatorname{char}(A)$ vanish.

In degree one, $\operatorname{char}^1(A)$ recovers the \emph{modular class} of $A$ \cite{EvensLuAlan}, the obstruction to the existence of an invariant transverse measure, first discovered in the context of Poisson manifolds \cite{Koszul,Weinstein} as the `Poisson analogue of the modular automorphism group of a von Neumann algebra'. There is an extensive literature about this important class (see the survey \cite{Yvette}), which is arguably the only reasonably well-understood among the intrinsic ones. It has been generalized to various geometric contexts \cite{CaseiroFernandes,Grabowski,Huebschmann, Yvette_BV,YvetteLaurentAlan,Yvette_Alan,Mehta,StienonXu,Vaisman}, and plays a fundamental role in many constructions \cite{CaseiroFernandes,PMCT1,FernandesDamianou,EvensLuAlan,PT_Homology,GMP,Kubarski_Fibre,Kubarski_Mishchenko,Xu}.

The purpose of this short note is to show that intrinsic characteristic classes are invariant under the following version of \emph{weak Morita equivalence} \cite[Section~6.2]{Ginz01}: two Lie algebroids $B$ on $N$ and $A$ on $M$ are weak Morita equivalent if there are submersions $N \stackrel{\phantom{1}\mathbf{s}}{\leftarrow} \Sigma \stackrel{\mathbf{t}\phantom{1}}{\to} M$, and a Lie algebroid isomorphism $(\Phi,\id)\colon \mathbf{s}^!(B) \diffto \mathbf{t}^!(A)$ between the pullbacks of $B$ and $A$ to~$\Sigma$. This establishes a correspondence between cohomology classes in $\mathrm{H}(B)$ and $\mathrm{H}(A)$, and the claim is that $\operatorname{char}(B)$ and $\operatorname{char}(A)$ are related. In fact, we prove slightly more:

\begin{theorem*}\label{thm : pullback of char} Intrinsic characteristic classes are functorial with respect to transverse maps: if $\phi\colon N \to M$ is transverse to a Lie algebroid $A$ on $M$, then $\operatorname{char}(\phi^!(A)) = \phi^*\operatorname{char}(A)$.
\end{theorem*}

Versions of this result have appeared in the literature in various forms; we here quote those most pertinent to our setting.

In \cite[Theorem 4.2]{GinzGol} it was shown, building on previous work \cite{GinzLu}, that the modular class is a Morita invariant for locally unimodular Poisson manifolds. Shortly afterwards, \emph{secondary} and \emph{intrinsic} characteristic classes were introduced (see \cite{CrainicNL,Cr_VEst,CrFer_Char,RuiAdv,Kubarski}), and in \cite[Corollary 8]{Cr_VEst} it was proved that the intrinsic characteristic classes of Poisson manifolds of degree $(2q-1)$ are invariant under Morita equivalences whose fibres are at least homologically $(2q-1)$-connected; it was later extended to weak Morita equivalences of Lie algebroids under a similar connectivity condition \cite[Example~6.16]{Ginz01}.

More recently, it was proved in \cite[Theorem~3.10]{YvetteLaurentAlan} that the modular class is functorial with respect to arbitrary transverse maps~-- thus dropping the connectivity condition~-- and the authors pose the question in \cite[(iii), p.~729]{YvetteLaurentAlan} about the behavior of higher intrinsic characteristic classes under morphisms. It was this question that piqued our interest, and which our Main Theorem seeks to answer.

Let us conclude these introductory remarks by pointing out that, in light of the correspondence between 2-term representations up to homotopy and VB-algebroids (see, e.g., \cite{GraciaMehta}), it would be interesting to revisit the discussion below in the context of VB-algebroids and groupoids, and to explain the relationship with the results in~\cite{HoyoOrtiz}~-- which develops a similar line of inquiry, and through methods that bear great resemblance to the ones employed here.\footnote{I~thank the anonymous referees for bringing this to my attention.}

The paper is organized as follows: our conventions are discussed in Section~\ref{sec : Characteristic classes}, where we summarize the construction of primary, secondary and intrinsic characteristic classes of Lie algebroids from \cite{CrainicNL,CrFer_Char}, referring there to proofs. In Section~\ref{section3} we prove our Main Theorem: as we explain there, this result is a straightforward consequence of the case of pulling back a Lie algebroid $A$ on $M$ by a submersion $p\colon \Sigma\to M$, and our proof, in that case, reduces to the construction of appropriate connection and metric on $\operatorname{Ad}\big(p^!(A)\big)$, so that the adjoint connection of $p^!(A)$ splits as a direct sum of the pullback of the adjoint connection of $A$ and a metric subconnection.

\section{Characteristic classes}\label{sec : Characteristic classes}

In this section, we give a summary of the main results and constructions needed to contextualize our discussion, referring to the appropriate references for further details.
\begin{enumerate}\itemsep=0pt
 \item For vector bundles $E$ and $D$ on $M$, we denote by $\Omega^p_{\mathrm{nl}}(E;D)$ the space of \emph{nonlinear forms} of degree $p$ on $E$ with values in $D$~-- that is, the linear subspace of $\mathrm{Hom}(\wedge^p\Gamma(E),\Gamma(D))$ consisting of those elements $\omega$ which decrease support, in the sense that $\omega(e_1,\dots ,e_p)$ is identically zero around any point around which some $e_i \in \Gamma(E)$ vanishes identically. When~$D$ is the trivial line bundle, we write $\Omega^p_{\mathrm{nl}}(E)$, and we note that $\Omega^p_{\mathrm{nl}}(E;D)$ is a~module over~$\Omega^p_{\mathrm{nl}}(E)$. Linear forms $\omega \in \Omega^p(E,D)=\Gamma(\wedge^pE^* \otimes D)$ are identified with those elements of~$\Omega^p_{\mathrm{nl}}(E;D)$ which are $C^{\8}(M)$-linear in their entries. There are obvious variations when $D$ is complex or graded; see~\cite{AbadCrainic,CrainicNL}.
 \item Let $A$ be a Lie algebroid on $M$, and let $D$ be the graded, complex vector bundle $D_0 \oplus D_1$, equipped with an odd endomorphism
\begin{gather*}
 \bd = \left(\begin{matrix}
 0 & \bd^1_0 \\
 \bd^0_1 & 0
 \end{matrix}\right) \colon \ D \rmap D, \qquad \bd^2 = 0.
\end{gather*}A \emph{nonlinear connection} of $A$ on $D$ is a linear map $\nabla \colon \Gamma(A) \to \operatorname{End}(\Gamma(D))$, such that, for all $a \in \Gamma(A)$,
\begin{enumerate}\itemsep=0pt
\item[a)] $\nabla$ is a local operator;
 \item[b)] $\nabla_a$ preserves parity;
 \item[c)] $\nabla_a$ commutes with $\bd$;
 \item[d)] $\nabla_a$ satisfies $\nabla_a fs = f\nabla_as+ (\mathscr{L}_af)s$ for all $f \in C^{\8}(M)$, $s \in \Gamma(D)$.
\end{enumerate}
\item A nonlinear connection of $A$ on $D$ induces:
\begin{itemize}\itemsep=0pt
 \item a derivation of degree one $\dd_{\nabla}\colon \Omega_{\mathrm{nl}}(A;D) \to \Omega_{\mathrm{nl}}(A;D)$, $\dd_{\nabla}\eta(a_0,\dots ,a_p)$ being given by the usual formula
\begin{gather*}
 \!\sum_{i=0}^p\! (-1)^i\nabla_{a_i}\eta(a_0,\dots ,\widehat{a_i},\dots ,a_p) + \!\sum_{i<j}\! (-1)^{i+j}\eta([a_i,a_j],a_0,\dots ,\widehat{a_i},\dots ,\widehat{a_j},\dots ,a_p);\!
\end{gather*}
\item a \emph{dual} nonlinear connection $\nabla^{\vee}$ of $A$ on $D^*$, defined by the condition that
\begin{gather*}
 \mathscr{L}_a\la \theta,s\ra = \la \nabla^{\vee}_a\theta,s\ra + \la \theta,\nabla_as\ra, \qquad a \in \Gamma(A), \qquad s \in \Gamma(D), \quad \theta \in \Gamma(D^*),
\end{gather*}
\item a nonlinear connection of $A$ on $\operatorname{End}(D)$, given by $\nabla_aT=[\nabla_a,T]$, whose induced derivation $\dd_{\nabla}\colon\Omega_{\mathrm{nl}}(A;\operatorname{End}(D)) \to \Omega_{\mathrm{nl}}(A;\operatorname{End}(D))$ is given by the graded commuta\-tor~$[\nabla,\cdot]$.
\end{itemize}
\item A \emph{Hermitian metric} $h$ on $D$, regarded as a complex-antilinear map $D \to D^*$, conjugates a~nonlinear connection $\nabla$ of $A$ on $D$ to an \emph{$h$-dual} nonlinear connection $\nabla^h$ of $A$ on $D$, given by $\nabla^h_a:= h^{-1} \circ \nabla^{\vee}_a \circ h$. If $\nabla=\nabla^h$, we say that $h$ is \emph{invariant} under~$\nabla$, or that~$\nabla$ is \emph{$h$-metric}. Note that every Hermitian metric $h$ is invariant under some nonlinear connection; e.g., $\nabla_m:= \frac{1}{2}\big(\nabla+\nabla^h\big)$.
\item A nonlinear \emph{subconnection} of a nonlinear connection~$\nabla$ of $A$ on $D$ is the restriction $\nabla'=\nabla|_{D'}$ of $\nabla$ to an invariant subbundle $D'$, i.e., one for which $\nabla_a \Gamma(D') \subset \Gamma(D')$ for all $a \in \Gamma(A)$. If that is the case, there is an induced \emph{quotient} nonlinear connection $\nabla/D'$ of $A$ on $D/D'$, $(\nabla/D')_a [s] = [\nabla_as]$. When $D = D' \oplus D''$ where $\nabla'':=\nabla|_{D''}$ is another subconnection, we say that $\nabla$ splits as a direct sum, and write $\nabla = \nabla' \oplus \nabla''$.
\item For a nonlinear connection $\nabla$ of $A$ on $D$, $\dd_{\nabla}^2=R_{\nabla} \wedge $, where $R_{\nabla}$ denotes the \emph{curvature} of~$\nabla$,
\begin{gather*}
 R_{\nabla} \in \Omega^2_{\mathrm{nl}}(A;\operatorname{End}(D)), \qquad R_{\nabla}(a,b) = [\nabla_a,\nabla_b]-\nabla_{[a,b]}, \qquad a,b \in \Gamma(A),
\end{gather*}and it is always the case that $\dd_{\nabla}R_{\nabla} = 0$. If $R_{\nabla}=0$, we call $\nabla$ a \emph{nonlinear representation}. Because the supertrace $\operatorname{Tr}_s(T)=\operatorname{Tr}(T_{00})-\operatorname{Tr}(T_{11})$ induces a linear map intertwining derivations,\begin{gather*}\operatorname{Tr}_s \colon \ \Omega_{\mathrm{nl}}(A;\operatorname{End}(D)) \rmap \Omega_{\mathrm{nl}}(A;\C), \qquad \dd_A\operatorname{Tr}_s = \operatorname{Tr}_s\dd_{\nabla},\end{gather*}it follows in general that $\operatorname{Tr}_s\big(R_{\nabla}^q\big) \in \Omega^{2q}_{\mathrm{nl}}(A;\C)$ are $\dd_A$-closed for every integer $q$; see \cite{CrainicNL}.
 \item If $(\Phi,\phi)\colon B \to A$ is a morphism of Lie algebroids, and $\nabla$ is a nonlinear connection of $A$ on $D$, there is an induced \emph{pullback nonlinear connection} $(\Phi,\phi)^!\nabla$ of $B$ on $\phi^*(D)$,
 \begin{align*}\label{eq : pullback connection}
 (\Phi,\phi)^!\nabla_{a}\phi^{\dagger}(s):=\phi^{\dagger}(\nabla_{\Phi(a)}s), \qquad a \in \Gamma(\phi^!(A)), \qquad s \in \Gamma(D),
 \end{align*}in which case $(\Phi,\phi)\colon (\Phi,\phi)^!\nabla \to \nabla$ defines a \emph{pullback morphism} of nonlinear connections, in the sense that the induced linear map
 \begin{gather*}
 (\Phi,\phi)^*\colon \ \Omega_{\mathrm{nl}}(A;D)\to \Omega_{\mathrm{nl}}(B;\phi^*(D))
 \end{gather*}intertwines the derivations $\dd_{\nabla}$ and $\dd_{(\Phi,\phi)^!\nabla}$.\footnote{As explained in \cite{YvetteLaurentAlan}, it is best to think that a connection $\nabla \colon A \acts D$ induces the derivation $\dd_{\nabla^{\vee}}$, simply because the map of modules induced by a pair of vector bundle maps $\Phi\colon B \to A$ and $\Psi\colon D_B \to D_A$ covering the same smooth map $\phi\colon N \to M$ is $(\Phi,\Psi,\phi)^*\colon \Omega(A;D_A^*) \to \Omega(B;D_B^*)$. A \emph{morphism} from a connection $\nabla_B\colon B \acts D_B$ to a connection $\nabla_A \colon A \acts D_A$ is then a such triple $(\Phi,\Psi,\phi)$ for which $(\Phi,\Psi,\phi)^*$ intertwines the derivations $\dd_{\nabla^{\vee}_A}$ and $\dd_{\nabla^{\vee}_B}$. When $\Psi$ is fibrewise an isomorphism~-- as in the case of a pullback morphism~-- we may dualize the construction above to a map of modules $\Omega(A;D_A) \to \Omega(B;D_B)$ intertwining the derivations $\dd_{\nabla_A}$ and~$\dd_{\nabla_B}$.} If $\phi\colon N \to M$ is a smooth map transverse to a Lie algebroid $A$ on $M$, i.e.,
\begin{gather*}
 \phi_*(T_xN) + \varrho_A(A_{\phi(x)}) = T_{\phi(x)}M, \qquad x \in N,
\end{gather*}then there is a \emph{pullback Lie algebroid} $\phi^!(A):=TN \times_{TM}A$ on $N$, and an induced \emph{pullback morphism} of Lie algebroids $\big(\widetilde{\phi},\phi\big)\colon \phi^!(A) \to A$. In this case, we will write simply~$\phi^!\nabla$ and~$\phi^*$ instead of $\big(\widetilde{\phi},\phi\big)^!\nabla$ and $\big(\widetilde{\phi},\phi\big)^*$.
\item A nonlinear connection is a \emph{connection} tout court if
\begin{gather*}
 \nabla_{fa}s = f\nabla_as, \qquad f \in C^{\8}(M), \qquad a \in \Gamma(A), \qquad s \in \Gamma(D),
\end{gather*}that is, if it is $C^{\8}(M)$-linear in the $\Gamma(A)$-entry, in which case we write $\nabla\colon A \acts D$. Two nonlinear connections $\nabla^0$, $\nabla^1$ are \emph{equivalent} provided that there exists $\theta \in \Omega_{\mathrm{nl}}^1(A;\operatorname{End}(D))$, such that
\begin{gather*}
 \nabla^1_a -\nabla^0_a = [\theta(a),\bd], \qquad a \in \Gamma(A),
\end{gather*}in which case $\operatorname{Tr}_s\big(R_{\nabla_0}^q\big)=\operatorname{Tr}_s\big(R_{\nabla_1}^q\big)$ for all $q$ (see \cite{CrFer_Char}). A nonlinear connection~$\nabla$ of~$A$ on~$D$ will be called a \emph{connection up to homotopy} if it is equivalent to a connection; in this case, we will write $\nabla\colon A \homo D$. Both connections and connections up to homotopy are preserved by all operations on nonlinear connections described in items~3--7. Note that, for a connection up to homotopy $\nabla$, $\operatorname{Tr}_s\big(R_{\nabla}^q\big)$ are linear forms, $\operatorname{Tr}_s\big(R_{\nabla}^q\big) \in \Omega^{2q}(A)$. A~\emph{representation up to homotopy}\footnote{For the convenience of the reader, we chose to maintain the term \emph{representation up to homotopy} as it appears in \cite{CrainicNL,CrFer_Char}, in spite of the fact that terminology has come to mean something else~\cite{AbadCrainic}.} is a~connection up to homotopy for which $R_{\nabla}$ vanishes identically, in which case $\dd_{\nabla}$ turns $\Omega^{\bullet}_{\mathrm{nl}}(A;D)$ into a cochain complex.
\end{enumerate}
In the remainder of this section, we recall the discussion in~\cite{CrFer_Char}, referring there to proofs and further details.

\begin{lemma*} There is a rule $\mathrm{cs}$ which assigns to all non-negative integers $p,q\geqslant 0$ and connections up to homotopy $\nabla_0,\dots ,\nabla_p\colon A \homo D$, a cochain
\begin{gather*}
 \mathrm{cs}^q(\nabla_0,\dots ,\nabla_p) \in \Omega^{2q-p}(A;\C)
\end{gather*}with the property that, for every permutation $\sigma$ and Hermitian metric $h$ on $D$:{\samepage
\begin{enumerate}\itemsep=0pt
 \item[$\mathrm{CS} 1)$] $\mathrm{cs}^q(\nabla)=\operatorname{Tr}_s\big(R_{\nabla}^q\big)$,
 \item[$\mathrm{CS} 2)$] $\mathrm{cs}^q(\nabla_{\sigma(0)},\dots ,\nabla_{\sigma (p)})=(-1)^{\sigma}\mathrm{cs}^q(\nabla_0,\dots ,\nabla_p)$,
 \item[$\mathrm{CS} 3)$] $\dd_A\mathrm{cs}^q(\nabla_0,\dots ,\nabla_p) = \sum\limits_{i=0}^p(-1)^i\mathrm{cs}^q\big(\nabla_0,\dots ,\widehat{\nabla_i},\dots ,\nabla_p\big)$,
 \item[$\mathrm{CS} 4)$] $\mathrm{cs}^q\big(\nabla_0^h,\dots ,\nabla_p^h\big) =(-1)^q\overline{\mathrm{cs}^q(\nabla_0,\dots ,\nabla_p)}$.
\end{enumerate}}
\end{lemma*}

Such cochains are given explicitly by
\begin{gather*}
 \mathrm{cs}^q(\nabla_0,\dots ,\nabla_p) := \begin{cases}
 \operatorname{Tr}_s\big(R_{\nabla_0}^q\big) & \text{if} \ p=0,\vspace{1mm}\\
 (-1)^{\lfloor{\frac{p+1}{2}}\rfloor }\mint{-}_{\varDelta^p}\operatorname{Tr}_s\big(R_{\nabla^{\mathrm{aff}}}^q\big) & \text{if} \ p>0,
 \end{cases}
\end{gather*}where $\lfloor{t}\rfloor$ the greatest integer no greater than $t$ and:
\begin{itemize}\itemsep=0pt
 \item $\mint{-}_{\varDelta^p}\colon \Omega^{\bullet}\big(\!\pr^!(A);\C\big) \!\to\! \Omega^{\bullet-p}(A;\C)$ denotes the linear map of \emph{fibre integration}\footnote{To construct $\mint{-}_{\varDelta^p}$, fix a splitting $\sigma\colon \pr^*(A) \to \pr^!(A)$ to $\widetilde{\pr}$, and denote by $\mathrm{q}\colon \Omega\big(\pr^!A\big) \to \Omega(V)$ the homomorphism induced by the inclusion of $V=\ker \pr_* \subset T(M \times \varDelta^p)$. Then for $\omega \in \Omega^{p+q}\big(\pr^!(A)\big)$ and sections $a_1,\dots ,a_q \in \Gamma(A)$, define $\mint{-}_{\varDelta^p}\omega$ so that the identity below is satisfied:
 \begin{gather*}
 \iota_{a_q}\cdots \iota_{a_1}\mint{-}_{\varDelta^p}\omega:= \int_{\varDelta^p}\mathrm{q}(\iota_{\sigma(a_q)}\cdots \iota_{\sigma(a_1)}\omega).
 \end{gather*}} associated to~the canonical projection from the product of $M$ with the standard $p$-simplex $\pr\colon M \times \varDelta^p \!\to\! M$;
 \item $\nabla^{\mathrm{aff}} \colon \pr^!(A) \homo \pr^*(D)$ denotes the connection up to homotopy $\nabla^{\mathrm{aff}} = \sum\limits_{i=0}^p t_i\pr^!(\nabla_i)$.
\end{itemize}

Given a connection up to homotopy $\nabla\colon A \homo D$, define
\begin{gather*}
 \mathrm{cs}(\nabla)=\operatorname{Tr}_s(\exp(iR_{\nabla})) = \sum \frac{i^q}{q!}\mathrm{cs}^q(\nabla) \in \Omega(A;\C).
\end{gather*}

\begin{proposition}[primary characteristic classes]\label{pro : primary}\quad
 \begin{enumerate}\itemsep=0pt
 \item[$a)$] For a connection up to homotopy, we have $\dd_A\mathrm{cs}(\nabla) = 0$ and $\mathrm{cs}(\nabla_0 \oplus \nabla_1)=\mathrm{cs}(\nabla_0)+\mathrm{cs}(\nabla_1)$;
 \item[$b)$] for all Lie algebroid morphisms $(\Phi,\phi)\colon B \to A$ and connection up to homotopy $\nabla\colon A \homo D$, we have $\mathrm{cs}((\Phi,\phi)^!(\nabla)) = (\Phi,\phi)^*\mathrm{cs}(\nabla)$;
 \item[$c)$] the cohomology class $[\mathrm{cs}(\nabla)]$ does not depend on the choice of connection up to homotopy~$\nabla$;
 \item[$d)$] $[\mathrm{cs}(\nabla)] \!\in\! \mathrm{H}^{2\bullet}(A)$ is a real cohomology class lying in the image of the map $(\varrho_A,\id)^*\colon \HH(M) \!\to\! \HH(A)$ induced by the anchor of $A$;
 \item[$e)$] $[\mathrm{cs}(\nabla)] \in \mathrm{H}^{4\bullet}(A)$ if $\nabla$ is a real\footnote{We consider real vector bundles $D$ as complex ones via complexification $D \otimes_{\R}\C$, and we observe that a real nonlinear connection $\nabla$ of $A$ on $D$ induces a complex nonlinear connection $\nabla_{\C}$ of $A$ on $D \otimes_{\R}\C$, and that a~metric~$g$ on $D$ induces a Hermitian metric~$g_{\C}$ on the complexification $D \otimes_{\R}\C$, in such a way that $(\nabla^g)_{\C} = (\nabla_{\C})^{g_{\C}}$.} connection up to homotopy.
 \end{enumerate}
\end{proposition}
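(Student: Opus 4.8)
All five assertions follow from the Lemma above, the identity $\mathrm{cs}^q(\nabla)=\operatorname{Tr}_s(R_\nabla^q)$ recorded in $\mathrm{CS}1$, and the naturality of curvature under the operations on nonlinear connections catalogued in items~3--8. I would treat $a)$--$c)$ first, as these are formal, and then turn to $d)$ and $e)$.

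Part $a)$: closedness of $\mathrm{cs}(\nabla)=\sum_q\frac{i^q}{q!}\mathrm{cs}^q(\nabla)$ is immediate from the observation, already made in item~6, that each $\operatorname{Tr}_s(R_\nabla^q)$ is $\dd_A$-closed. Additivity under $\oplus$ follows because $R_{\nabla_0\oplus\nabla_1}=R_{\nabla_0}\oplus R_{\nabla_1}$ blockwise, so $R_{\nabla_0\oplus\nabla_1}^q=R_{\nabla_0}^q\oplus R_{\nabla_1}^q$, and $\operatorname{Tr}_s$ is additive over grading-respecting direct sums. Part $b)$: by item~7 the pullback morphism intertwines $\dd_\nabla$ and $\dd_{(\Phi,\phi)^!\nabla}$, hence also their squares $R_\nabla\wedge(\cdot)$ and $R_{(\Phi,\phi)^!\nabla}\wedge(\cdot)$; evaluating the resulting identity on sections of $D$ gives $R_{(\Phi,\phi)^!\nabla}=(\Phi,\phi)^*R_\nabla$, and since $\operatorname{Tr}_s$ is a fibrewise algebraic operation it commutes with $(\Phi,\phi)^*$, so $\mathrm{cs}^q((\Phi,\phi)^!\nabla)=(\Phi,\phi)^*\mathrm{cs}^q(\nabla)$ (item~8 ensures $(\Phi,\phi)^!\nabla$ is again a connection up to homotopy, so both sides are linear forms); summing over $q$ yields $b)$. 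Part $c)$: apply $\mathrm{CS}3$ with $p=1$ to connections up to homotopy $\nabla_0,\nabla_1\colon A\homo D$ to get $\dd_A\mathrm{cs}^q(\nabla_0,\nabla_1)=\mathrm{cs}^q(\nabla_1)-\mathrm{cs}^q(\nabla_0)$, so $[\mathrm{cs}^q(\nabla_0)]=[\mathrm{cs}^q(\nabla_1)]$ in $\HH^{2q}(A)$ for every $q$; sum over $q$.

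Part $d)$, reality: fixing any Hermitian metric $h$, $\mathrm{CS}4$ gives $\mathrm{cs}^q(\nabla^h)=(-1)^q\overline{\mathrm{cs}^q(\nabla)}$ while $[\mathrm{cs}^q(\nabla^h)]=[\mathrm{cs}^q(\nabla)]$ by $c)$, hence $[\overline{\mathrm{cs}^q(\nabla)}]=(-1)^q[\mathrm{cs}^q(\nabla)]$; since $\overline{i^q}\,(-1)^q=i^q$, the factor $i^q$ in the definition of $\mathrm{cs}$ is precisely what makes each summand $\frac{i^q}{q!}[\mathrm{cs}^q(\nabla)]$ — and therefore $[\mathrm{cs}(\nabla)]$ — invariant under conjugation. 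Part $d)$, image of the anchor: replace $\nabla$ by an equivalent \emph{genuine} connection $\nabla'\colon A\acts D$ (possible by the definition of connection up to homotopy, and $\mathrm{cs}^q(\nabla)=\mathrm{cs}^q(\nabla')$ by item~8); since every nonlinear connection preserves parity, $\nabla'$ is block-diagonal, $\nabla'=\nabla_0'\oplus\nabla_1'$ with $\nabla_i'\colon A\acts D_i$, so $[\mathrm{cs}(\nabla)]=[\operatorname{Tr}(\exp(iR_{\nabla_0'}))]-[\operatorname{Tr}(\exp(iR_{\nabla_1'}))]$ is the difference of the $A$-Chern characters of the honest bundles $D_0,D_1$. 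By ordinary Chern--Weil (equivalently, $c)$ applied with trivial grading) these classes may be computed from the pulled-back connections $(\varrho_A,\id)^!\bar\nabla_i$ associated to any ordinary linear connections $\bar\nabla_i$ on $D_i\to M$, whose curvatures are the $(\varrho_A,\id)^*$-pullbacks of $R_{\bar\nabla_i}$; hence both classes, and with them $[\mathrm{cs}(\nabla)]$, lie in $(\varrho_A,\id)^*\HH^{2\bullet}(M)$.

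Part $e)$: when $\nabla$ is real, $\mathrm{cs}^q(\nabla)=\operatorname{Tr}_s(R_{\nabla_{\C}}^q)$ is by construction the complexification of the real form $\operatorname{Tr}_s(R_\nabla^q)$, so $[\mathrm{cs}^q(\nabla)]$ is a real class; in $[\mathrm{cs}(\nabla)]=\sum_q\frac{i^q}{q!}[\mathrm{cs}^q(\nabla)]$ the odd-$q$ terms are then purely imaginary, whereas $d)$ forces the total to be real, so $[\mathrm{cs}^q(\nabla)]=0$ for odd $q$; the surviving terms lie in degrees $2q$ with $q$ even, that is, in $\HH^{4\bullet}(A)$. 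The only step I expect to require genuine care is the image-of-the-anchor half of $d)$: one must first pass to a genuine connection, observe that it is automatically block-diagonal, and thereby reduce to classical Chern--Weil theory for $A$-connections on ordinary vector bundles; everything else is bookkeeping with the Lemma and the naturality of curvature.
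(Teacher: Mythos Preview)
The paper does not actually prove this proposition: Section~\ref{sec : Characteristic classes} is explicitly a summary section, and immediately before the Lemma the text says ``we recall the discussion in~\cite{CrFer_Char}, referring there to proofs and further details.'' So there is no in-paper proof to compare against.

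That said, your sketch is correct and is essentially the standard argument one finds in~\cite{CrainicNL,CrFer_Char}. A few small remarks. In part~$d)$ (image of the anchor), you are tacitly using $c)$ and $b)$ in the ungraded setting: you view each $D_i$ as a graded bundle concentrated in a single parity with $\partial=0$, apply $c)$ to replace $\nabla'_i$ by $(\varrho_A,\id)^!\bar\nabla_i$, and then $b)$ to pull the Chern--Weil forms back through the anchor; it is worth saying this explicitly. In part~$e)$, the step ``$d)$ forces the total to be real, so the odd-$q$ terms vanish'' is legitimate only because the summands $\tfrac{i^q}{q!}[\mathrm{cs}^q(\nabla)]$ live in distinct cohomological degrees $2q$, so reality of the sum forces reality of each summand; make that grading argument explicit. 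With those two clarifications your proof is complete and matches the intended one.
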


We call the {\it Chern character} of $D$ the element $\mathrm{ch}(D) \in \mathrm{H}(A)$ represented by $\mathrm{cs}(\nabla)$, for some connection up to homotopy $\nabla\colon A \homo D$. We regard it as a {\it primary characteristic class}, obstructing the existence of a representation up to homotopy of $A$ on $D$. The vanishing of $\mathrm{cs}(\nabla)$ allows one to define {\it secondary characteristic classes} $\mathfrak{u}(\nabla) \in \mathrm{H}^{\mathrm{odd}}(A)$, which obstruct the existence of an invariant metric. For a connection up to homotopy $\nabla\colon A \homo D$ and a Hermitian metric $h$ on $D$, define
\begin{gather*}
 \mathrm{u}(\nabla,h):= \sum i^{q+1}\mathrm{cs}^q\big(\nabla,\nabla^h\big) \in \Omega^{\mathrm{odd}}(A;\C).
\end{gather*}

\begin{proposition}[secondary characteristic classes]\label{pro : secondary}\quad
 \begin{enumerate}\itemsep=0pt
 \item[$a)$] The cochains $\mathrm{u}(\nabla,h)$ are real;
 \item[$b)$] for all Lie algebroid morphism $(\Phi,\phi)\colon B \to A$, connection up to homotopy $\nabla\colon A \homo D$ and Hermitian metric~$h$, we have $\mathrm{u}((\Phi,\phi)^!(\nabla),\phi^*(h)) = (\Phi,\phi)^*\mathrm{u}(\nabla,h)$;
 \item[$c)$] If $\mathrm{cs}(\nabla)=0$, then $\dd_A\mathrm{u}(\nabla,h)=0$, in which case:
 \begin{enumerate}\itemsep=0pt
 \item[$i)$] $\mathfrak{u}(\nabla):=[\mathrm{u}(\nabla,h)] \in \mathrm{H}^{\mathrm{odd}}(A)$ is independent of~$h$;
 \item[$ii)$] $\mathfrak{u}(\nabla) \in \mathrm{H}^{4\bullet+1}(A)$ if $\nabla$ is a real connection up to homotopy.
 \end{enumerate}
 \end{enumerate}
\end{proposition}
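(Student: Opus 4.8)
The plan is to deduce all four parts from the formal properties $\mathrm{CS}1$--$\mathrm{CS}4$ of the rule $\mathrm{cs}$, together with the naturality of the individual cochains $\mathrm{cs}^q$ --- that is, $\mathrm{cs}^q((\Phi,\phi)^!\nabla_0,\dots,(\Phi,\phi)^!\nabla_p)=(\Phi,\phi)^*\mathrm{cs}^q(\nabla_0,\dots,\nabla_p)$ for a Lie algebroid morphism $(\Phi,\phi)\colon B\to A$, which is immediate from the explicit formula since curvature, the supertrace of its powers, the affine connection $\nabla^{\mathrm{aff}}$, and fibre integration along $\varDelta^p$ all commute with pullback (the case $p=0$ being Proposition~\ref{pro : primary}$(b)$ read off degree by degree). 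For $(a)$ I would then apply $\mathrm{CS}4$ with $p=1$ to the pair $(\nabla,\nabla^h)$, using that $h$-dualization is an involution, $(\nabla^h)^h=\nabla$, to get $\mathrm{cs}^q(\nabla^h,\nabla)=(-1)^q\overline{\mathrm{cs}^q(\nabla,\nabla^h)}$; combined with the antisymmetry $\mathrm{CS}2$, which here reads $\mathrm{cs}^q(\nabla^h,\nabla)=-\mathrm{cs}^q(\nabla,\nabla^h)$, this forces $\overline{\mathrm{cs}^q(\nabla,\nabla^h)}=(-1)^{q+1}\mathrm{cs}^q(\nabla,\nabla^h)$, hence $\overline{i^{q+1}\mathrm{cs}^q(\nabla,\nabla^h)}=i^{q+1}\mathrm{cs}^q(\nabla,\nabla^h)$ and $\mathrm{u}(\nabla,h)$ is real.

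For $(b)$ I would first record that $\phi$-pullback intertwines $h$-dualization with $\phi^*(h)$-dualization, i.e.\ $(\Phi,\phi)^!(\nabla^h)=\big((\Phi,\phi)^!\nabla\big)^{\phi^*(h)}$ --- which reduces to the facts that pullback commutes with the dual connection and that $\phi^*(h)$ acts fibrewise as $h$ --- and then $(b)$ falls out termwise upon applying the naturality of $\mathrm{cs}^q$ to the pair $(\nabla,\nabla^h)$: $\mathrm{u}\big((\Phi,\phi)^!\nabla,\phi^*(h)\big)=\sum i^{q+1}\mathrm{cs}^q\big((\Phi,\phi)^!\nabla,(\Phi,\phi)^!(\nabla^h)\big)=(\Phi,\phi)^*\mathrm{u}(\nabla,h)$.

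For $(c)$, $\mathrm{CS}3$ with $p=1$ gives $\dd_A\mathrm{cs}^q(\nabla,\nabla^h)=\mathrm{cs}^q(\nabla^h)-\mathrm{cs}^q(\nabla)$; since the forms $\mathrm{cs}^q(\nabla)\in\Omega^{2q}(A)$ sit in pairwise distinct degrees, the hypothesis $\mathrm{cs}(\nabla)=0$ forces $\mathrm{cs}^q(\nabla)=0$ for every $q$, and then $\mathrm{cs}^q(\nabla^h)=(-1)^q\overline{\mathrm{cs}^q(\nabla)}=0$ by $\mathrm{CS}4$, so $\dd_A\mathrm{u}(\nabla,h)=0$. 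For the $h$-independence in $(c\,i)$ I would run a homotopy over the $1$-simplex: the barycentric combinations $t_0\pr^*(h_0)+t_1\pr^*(h_1)$ assemble into a Hermitian metric $\widetilde h$ on $\pr^*(D)$ over $M\times\varDelta^1$, and since $\mathrm{cs}(\pr^!\nabla)=\pr^*\mathrm{cs}(\nabla)=0$, part $(c)$ makes $\mathrm{u}(\pr^!\nabla,\widetilde h)$ a closed odd cochain; fibre-integrating it and applying the Stokes formula for $\mint{-}_{\varDelta^1}$, together with $(b)$ for the two vertex inclusions $A\hookrightarrow\pr^!(A)$ (along which $\widetilde h$ restricts to $h_0$, resp.\ $h_1$), identifies $\mathrm{u}(\nabla,h_1)-\mathrm{u}(\nabla,h_0)$ with $\pm\,\dd_A\mint{-}_{\varDelta^1}\mathrm{u}(\pr^!\nabla,\widetilde h)$, so the classes coincide in $\HH^{\mathrm{odd}}(A)$. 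This is the only step that needs input beyond $\mathrm{CS}1$--$\mathrm{CS}4$, and I expect it to be the main obstacle, chiefly in the bookkeeping for $\widetilde h$ and for the Stokes formula.

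Finally, for $(c\,ii)$: when $\nabla$ is real I would compute $\mathfrak u(\nabla)$ with the Hermitian metric $g_{\C}$ attached to a real metric $g$ on $D$, so that $(\nabla_{\C})^{g_{\C}}=(\nabla^g)_{\C}$ with $\nabla^g$ again real. Since affine combinations, curvature, supertrace and fibre integration preserve the real structure, $\mathrm{cs}^q(\nabla_{\C},(\nabla^g)_{\C})$ is the complexification of a \emph{real} form; comparing this with the identity $\overline{\mathrm{cs}^q(\nabla,\nabla^h)}=(-1)^{q+1}\mathrm{cs}^q(\nabla,\nabla^h)$ obtained in $(a)$ forces $\mathrm{cs}^q(\nabla_{\C},(\nabla^g)_{\C})=0$ whenever $q$ is even. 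Hence only odd $q$ contribute to $\mathrm{u}(\nabla_{\C},g_{\C})$, and for odd $q$ the cochain $\mathrm{cs}^q(\nabla_{\C},(\nabla^g)_{\C})$ has degree $2q-1\equiv 1\pmod 4$, so $\mathrm{u}(\nabla_{\C},g_{\C})\in\Omega^{4\bullet+1}(A)$; by $(c\,i)$ this representative computes $\mathfrak u(\nabla)$, whence $\mathfrak u(\nabla)\in\HH^{4\bullet+1}(A)$.
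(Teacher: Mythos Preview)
The paper does not actually prove this proposition: it is stated as part of the summary of \cite{CrFer_Char}, with the explicit disclaimer that proofs are to be found there. So there is no ``paper's own proof'' to compare against, only the formal properties $\mathrm{CS}1$--$\mathrm{CS}4$ and the explicit Chern--Simons formula which the paper records.

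Your derivation is correct and is exactly the argument one would expect from those properties. Parts $(a)$, $(c)$ and $(c\,ii)$ are purely formal consequences of $\mathrm{CS}2$--$\mathrm{CS}4$ together with the involutivity $(\nabla^h)^h=\nabla$, and your computations are clean. Part $(b)$ requires, as you correctly isolate, the naturality of the individual $\mathrm{cs}^q(\nabla_0,\dots,\nabla_p)$ under Lie algebroid morphisms and the compatibility $(\Phi,\phi)^!(\nabla^h)=\big((\Phi,\phi)^!\nabla\big)^{\phi^*(h)}$; both are immediate from the explicit integral formula and the fibrewise nature of $h$-dualization. For $(c\,i)$ your homotopy over $\varDelta^1$ via the convex family $\widetilde h = t_0\pr^*(h_0)+t_1\pr^*(h_1)$ is the standard route; the only external ingredient is the Stokes identity $\dd_A\mint{-}_{\varDelta^1}\omega \pm \mint{-}_{\varDelta^1}\dd_{\pr^!A}\omega = \iota_1^*\omega - \iota_0^*\omega$ for fibre integration, which is a routine check from the footnote description of $\mint{-}_{\varDelta^p}$.
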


\begin{main_example}Let the \emph{adjoint bundle} $\operatorname{Ad}(A)$ of a Lie algebroid $A$ on $M$ be $A$ in even parity, and $TM$ in odd parity, equipped with $\bd(a,u) = (0,\varrho_A(a))$. Then $\nabla^{\mathrm{ad}}$ given by
\begin{gather*}
 \nabla^{\mathrm{ad}}_a(b,u):=([a,b]_A,[\varrho_Aa,u])
\end{gather*}defines a representation up to homotopy. This can be seen as follows:
every linear connection $\nabla\colon TM \acts A$ induces a linear {\it basic connection} $\nabla^{\mathrm{bas}} \colon A \acts \operatorname{Ad}(A)$,
\begin{align*}
\nabla^{\mathrm{bas}}_a(b,u):=\big(\nabla_{\varrho_A b}a+[a,b]_A,\varrho_A\nabla_{u}a+[\varrho_A a,u]\big);
\end{align*}and the nonlinear representation $\nabla^{\mathrm{ad}}$ is equivalent to $\nabla^{\mathrm{bas}}$
\begin{gather*}
\nabla^{\mathrm{ad}}=\nabla^{\mathrm{bas}} + [\theta_{\nabla},\bd], \qquad \theta_{\nabla}(a)(b,u):=(\nabla_ua,0).
\end{gather*}$\nabla^{\mathrm{ad}}$ can be alternatively defined as the unique representation up to homotopy which under the canonical Lie algebroid map $(\pr,\id)\colon J_1(A) \to A$ pulls back to the canonical representation
\begin{gather*}
\nabla^{j_1}\colon \ J_1(A) \acts \operatorname{Ad}(A), \qquad \nabla^{j_1}_{j_1a}(b,u) = \nabla^{\mathrm{ad}}_a(b,u).
\end{gather*}
\end{main_example}

\begin{defi} The {\it intrinsic characteristic classes} $\operatorname{char}^q(A) \in \HH^{2q-1}(A)$ of the Lie algebroid $A$ are the secondary characteristic classes $\mathfrak{u}^q(\nabla^{\mathrm{ad}})$ of the adjoint representation up to homotopy~$\nabla^{\mathrm{ad}}$.
\end{defi}
Note that it follows from the discussion in the Main Example, and item b) of Proposition~\ref{pro : secondary}, that $\operatorname{char}(A)$ can be alternatively defined as the unique element $\HH^{\mathrm{odd}}(A)$ which pulls back under the Lie algebroid map $(\pr,\id)\colon J_1(A) \to A$ to the secondary characteristic class $\mathfrak{u}(\nabla^{j_1})$ of the canonical representation of $J_1(A)$ on $\operatorname{Ad}(A)$.

\begin{example*}The \emph{modular class} $\mathrm{mod}(A)$ of $A$ coincides with $2\pi\ \operatorname{char}^1(A) \in \HH^{1}(A)$.
\end{example*}

\begin{remark*} Intrinsic characteristic classes are not a complete obstruction to the existence of a~metric which is invariant under a basic connection. This is in contrast to the case of the modular class, whose vanishing implies the existence of an invariant measure. As an example, let $\mathfrak{g}$ be the 3-dimensional Lie algebra given by
 \begin{gather*}
 [e_1,e_2] = 0, \qquad [e_1,e_3] = ae_1+be_2, \qquad [e_2,e_3] = ce_1+de_2, \qquad Q = \left( \begin{matrix}
 a & b\\
 c & d
 \end{matrix}
 \right) \in \mathrm{GL}_2(\R),
 \end{gather*}which we regard as a Lie algebroid over a point. By dimensional reasons, we have
 \begin{gather*}
 \operatorname{char}(\mathfrak{g}) = 0 \quad \Longleftrightarrow \quad \mathrm{mod}(\mathfrak{g})=0 \quad \Longleftrightarrow \quad \operatorname{Tr} Q=0.
 \end{gather*}On the other hand, the only basic connection is $\nabla^{\mathrm{bas}}_xy = [x,y]$, and $\operatorname{Ad}(\mathfrak{g})$ admits a (positive-definite) $\mathrm{ad}$-invariant metric iff $\mathfrak{g}$ is abelian.
\end{remark*}

\section{Proof of the Main Theorem}\label{section3}

While primary and secondary characteristic classes are functorial with respect to pullbacks essentially by inspection of the construction, for intrinsic characteristic classes the situation is slightly more intricate because the adjoint representation up to homotopy of a pullback is not itself a pullback representation up to homotopy. The following special case will turn out to be key:

\begin{proposition}\label{pro : pullback of char by submersions}
 Intrinsic characteristic classes are functorial with respect to surjective submersions.
\end{proposition}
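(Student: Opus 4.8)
The plan is to deduce $\operatorname{char}(p^!(A))=p^*\operatorname{char}(A)$ from the functoriality of \emph{secondary} classes (Proposition~\ref{pro : secondary}$(b)$) by exhibiting, after an explicit gauge transformation, a very concrete representative of the adjoint representation up to homotopy of $p^!(A)$. Write $\varrho\colon A\to TM$ for the anchor and $\mathcal V:=\ker(\dd p)\subset T\Sigma$ for the vertical bundle. First I would choose a complement $\mathcal H\subset T\Sigma$, so $T\Sigma=\mathcal H\oplus\mathcal V$ with $\dd p|_{\mathcal H}\colon\mathcal H\diffto p^*(TM)$ and horizontal lift $\hor$. Since $p^!(A)=\{(v,a):\dd p(v)=\varrho(a)\}$ has anchor $(v,a)\mapsto v$, the assignment $a\mapsto(\hor(\varrho a),a)$ is a splitting $\sigma\colon p^*(A)\to p^!(A)$ of the surjection $(v,a)\mapsto a$, whose kernel is $\mathcal V$; hence $p^!(A)=\mathcal V\oplus\sigma(p^*A)$, and, as a graded complex,
\[
\operatorname{Ad}(p^!A)=p^!(A)\oplus T\Sigma=\underbrace{\bigl(\sigma(p^*A)\oplus\mathcal H\bigr)}_{E}\ \oplus\ \underbrace{\bigl(\mathcal V\oplus\mathcal V\bigr)}_{F},
\]
where from $\bd((v,a),u)=(0,v)$ one reads that $E$ is a subcomplex isomorphic (via $\sigma$ and $\hor$) to $p^*(\operatorname{Ad}A)$ with differential $p^*\bd_A$, while $F$ carries the differential $\bd|_F=\operatorname{id}_{\mathcal V}$ and is therefore acyclic.

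The heart of the proof is to build a linear connection $\nabla^{\Sigma}\colon T\Sigma\acts p^!(A)$ --- assembled from $\mathcal H$, a chosen linear connection $\nabla^{M}\colon TM\acts A$, and an auxiliary metric connection on $\mathcal V$ --- whose basic connection $\nabla^{\mathrm{bas}}_{p^!A}\colon p^!(A)\acts\operatorname{Ad}(p^!A)$ becomes, after an explicit gauge transformation $\nabla\mapsto\nabla+[\theta(\cdot),\bd]$, a genuine direct sum
\[
\nabla^{\mathrm{bas}}_{p^!A}+[\theta(\cdot),\bd]\ =\ (\widetilde p,p)^!\bigl(\nabla^{\mathrm{bas}}_{A}\bigr)\ \oplus\ \nabla^{F},
\]
where under $E\cong p^*(\operatorname{Ad}A)$ the first summand is the pullback, along the pullback morphism $(\widetilde p,p)\colon p^!(A)\to A$, of the basic connection $\nabla^{\mathrm{bas}}_{A}$ of $A$ associated with $\nabla^{M}$, and the second summand is \emph{metric} with respect to a Hermitian metric $h_F$ on $F$. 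Granting this, and recalling that $\nabla^{\mathrm{ad}}_{p^!A}$ is equivalent to $\nabla^{\mathrm{bas}}_{p^!A}$ (Main Example) and that secondary classes, like the primary ones, depend only on the equivalence class of a connection up to homotopy (\cite{CrFer_Char}), we obtain $\operatorname{char}(p^!A)=\mathfrak u\bigl((\widetilde p,p)^!\nabla^{\mathrm{bas}}_{A}\oplus\nabla^{F}\bigr)$.

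To conclude, pick any Hermitian metric $h_M$ on $\operatorname{Ad}(A)$ and put $h:=p^*(h_M)\oplus h_F$. Since the $h$-dual of a direct sum is the direct sum of the $h$-duals, and $\mathrm{cs}^q$ --- hence $\mathrm u$ --- is additive over direct sums of connections up to homotopy (by the computation underlying Proposition~\ref{pro : primary}$(a)$),
\[
\mathrm u\bigl((\widetilde p,p)^!\nabla^{\mathrm{bas}}_{A}\oplus\nabla^{F},\ h\bigr)=\mathrm u\bigl((\widetilde p,p)^!\nabla^{\mathrm{bas}}_{A},\,p^*h_M\bigr)+\mathrm u\bigl(\nabla^{F},h_F\bigr).
\]
The second term vanishes identically: $\nabla^{F}=(\nabla^{F})^{h_F}$, so $\mathrm{CS}2$ forces $\mathrm{cs}^q(\nabla^{F},\nabla^{F})=-\mathrm{cs}^q(\nabla^{F},\nabla^{F})=0$. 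The first term equals $p^*\,\mathrm u(\nabla^{\mathrm{bas}}_{A},h_M)$ by Proposition~\ref{pro : secondary}$(b)$ applied to $(\widetilde p,p)\colon p^!(A)\to A$. Passing to cohomology,
\[
\operatorname{char}(p^!A)=\bigl[\mathrm u\bigl((\widetilde p,p)^!\nabla^{\mathrm{bas}}_{A}\oplus\nabla^{F},h\bigr)\bigr]=p^*\bigl[\mathrm u(\nabla^{\mathrm{bas}}_{A},h_M)\bigr]=p^*\operatorname{char}(A).
\]

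The step I expect to be the main obstacle is the construction of $\nabla^{\Sigma}$ in the second paragraph. The difficulty is that $\nabla^{\mathrm{bas}}_{p^!A}$ is fed by \emph{both} $\nabla^{\Sigma}$ and the Lie bracket of $p^!(A)$, and the bracket does not respect the splitting $E\oplus F$: the $p^!(A)$-bracket of two sections of $\sigma(p^*A)$ need not lie in $\sigma(p^*A)$ --- the defect is vertical and controlled by the curvature of $\mathcal H$ --- and $[v,\eta]$ need not be vertical for $\eta\in\Gamma(\mathcal V)$. Thus one must choose $\nabla^{\Sigma}$ so as to simultaneously (i) keep $F$ invariant with quotient connection the pullback of $\nabla^{\mathrm{bas}}_{A}$, and (ii) make the resulting $F$-block metric; only after that does the compatibility of $\nabla^{\mathrm{bas}}_{p^!A}$ with $\bd$ pin down the surviving off-diagonal block to a shape that a single, explicit $\theta$ with $[\theta(\cdot),\bd]$ removes. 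Everything downstream of this construction is the formal bookkeeping carried out above.
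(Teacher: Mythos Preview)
Your proposal is correct and follows essentially the same strategy as the paper: split $\operatorname{Ad}(p^!A)$ into $p^*\operatorname{Ad}(A)\oplus(\mathcal V\oplus\mathcal V)$ via a horizontal distribution, arrange that the basic connection of $p^!(A)$ is (up to equivalence) the direct sum of the pullback of $\nabla^{\mathrm{bas}}_A$ and a metric subconnection on the vertical block, and then use additivity of $\mathrm{cs}^q$ together with Proposition~\ref{pro : secondary}(b).

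The one substantive difference is that the paper shows the gauge transformation you anticipate is unnecessary: by building $\overline{\nabla}\colon T\Sigma\acts p^!(A)$ carefully---in particular by inserting the curvature $c\in\Gamma(\wedge^2p^*T^*M\otimes\mathcal V)$ of the Ehresmann connection into the ``horizontal--horizontal'' slot of $\overline{\nabla}$ and using the basic connection of the Levi--Civita connection of $g_\Sigma$ (projected to preserve $\mathcal H$ and $\mathcal V$) for the vertical slots---the resulting $\overline{\nabla}^{\mathrm{bas}}$ already splits \emph{on the nose} as $\nabla^{\mathbb V}\oplus p^!(\nabla^{\mathrm{bas}}_A)$, with $\nabla^{\mathbb V}$ metric for the chosen $\overline g$. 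This neutralises precisely the two obstructions you flag (the vertical defect in $[\sigma(a),\sigma(b)]$ and the mixed brackets), so no off-diagonal block survives and no $\theta$ is needed. Your route via an explicit $[\theta(\cdot),\bd]$ would also work, but buys nothing extra and requires the additional appeal to invariance of $\mathfrak u$ under equivalence; the paper's choice sidesteps that and yields the identity $\mathrm{cs}(\overline\nabla^{\mathrm{bas}},\overline\nabla^{\mathrm{bas},\overline g})=p^*\mathrm{cs}(\nabla^{\mathrm{bas}},\nabla^{\mathrm{bas},g})$ already at the cochain level.
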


The proof of the Main Theorem requires the following direct consequence of Proposition~\ref{pro : pullback of char by submersions}:

\begin{proposition}\label{pro : pullback of char by inclusion}
 Intrinsic characteristic classes are functorial with respect to transverse, closed embeddings.
\end{proposition}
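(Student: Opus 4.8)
The plan is to reduce the statement to Proposition~\ref{pro : pullback of char by submersions} by threading the embedding through a tubular neighbourhood. Let $\phi\colon N\to M$ be a transverse closed embedding and realise a tubular neighbourhood of $\phi(N)$ as an open set $U\subseteq M$ with a vector bundle projection $p\colon U\to N$ whose zero section $\iota\colon N\hookrightarrow U$ satisfies $p\circ\iota=\id_N$ and $j\circ\iota=\phi$, where $j\colon U\hookrightarrow M$ is the inclusion. Since $j$ is a submersion — and in any case $\operatorname{Ad}(A|_U)=\operatorname{Ad}(A)|_U$ with every auxiliary connection and metric restricting — we have $\operatorname{char}(A|_U)=j^*\operatorname{char}(A)$, so applying $\iota^*$ and using $j\iota=\phi$ the identity $\operatorname{char}(\phi^!(A))=\phi^*\operatorname{char}(A)$ reduces to the case $M=U$, $\phi=\iota$; that is, to showing $\operatorname{char}(\iota^!(A_U))=\iota^*\operatorname{char}(A_U)$ for the zero section of a vector bundle $p\colon U\to N$, with $A_U:=A|_U$.

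Now bring in the submersion $p$. Being a surjective submersion, $p$ is transverse to $\iota^!(A_U)=\phi^!(A)$, and Proposition~\ref{pro : pullback of char by submersions} gives $\operatorname{char}\big(p^!(\iota^!(A_U))\big)=p^*\operatorname{char}\big(\iota^!(A_U)\big)$. Because $p\circ\iota=\id_N$, pullback along $\iota$ is a left inverse to $p^*$ on cohomology, so applying $\iota^*$ to this identity (this is just applying the cohomology map $\iota^*$ to a class, not functoriality of $\operatorname{char}$ for $\iota$) yields
\[
\iota^*\operatorname{char}\big(p^!(\iota^!(A_U))\big)=\operatorname{char}\big(\iota^!(A_U)\big).
\]
It therefore remains to compute $\iota^*$ of the same class $\operatorname{char}\big(p^!(\iota^!(A_U))\big)$ in a second way, by comparing the Lie algebroid $p^!(\iota^!(A_U))=(\iota p)^!(A_U)$ on $U$ with $A_U$ itself.

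These are not equal, but they are joined by the fibrewise dilation of the tube: $m\colon U\times[0,1]\to U$, $m(e,t)=t\cdot e$, is transverse to $A_U$, restricts to $\id_U$ and to $\iota\circ p$ at the two end faces, and is constant in $t$ along the zero section, so $\mathcal{A}:=m^!(A_U)$ is a Lie algebroid on $U\times[0,1]$ deforming $A_U$ into $p^!(\iota^!(A_U))$ \emph{rel $N$}. Homotopy invariance of the intrinsic classes along such a transverse deformation — a Moser-type argument lifting $\partial_t$ to an infinitesimal gauge transformation of $\mathcal{A}$ and integrating its flow — then furnishes an isomorphism $\HH\big(p^!(\iota^!(A_U))\big)\cong\HH(A_U)$ carrying $\operatorname{char}$ to $\operatorname{char}$ and, because the deformation fixes $N$ pointwise, restricting over $N$ to the identity of $\HH(\iota^!(A_U))$. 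Feeding this back shows $\iota^*\operatorname{char}\big(p^!(\iota^!(A_U))\big)=\iota^*\operatorname{char}(A_U)$, and together with the displayed equation this gives $\operatorname{char}(\iota^!(A_U))=\iota^*\operatorname{char}(A_U)$. One may also package the last step as the assertion that inside a tubular neighbourhood $p^!(\iota^!(A_U))$ is isomorphic to $A_U$ through a Lie algebroid isomorphism equal to the identity over $N$, after which the conclusion is purely formal from Propositions~\ref{pro : primary} and~\ref{pro : secondary}.

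The main obstacle is precisely this last step — the deformation argument and, above all, its compatibility with restriction to the zero section. Two points need care. First, $U$ is generally non-compact, so the flow integrating the lift of $\partial_t$ need not be complete; this is circumvented by working inside the scaled sub-tubes $m_t(U)$, on which $m_t$ is an open embedding for $t>0$, together with the fact that $\operatorname{char}$ is built from local data. Second, one must check that the cohomology isomorphism produced by the deformation is the identity over $N$: this holds because $m$ is constant in $t$ along $\iota(N)$, so the induced chain homotopy operator is supported away from the zero section and acts trivially on $\HH(\iota^!(A_U))$. With these settled, everything else is bookkeeping with the functoriality of the primary and secondary classes under Lie algebroid morphisms recorded above.
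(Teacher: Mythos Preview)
Your strategy is the same as the paper's: pass to a tubular neighbourhood, use Proposition~\ref{pro : pullback of char by submersions} for the projection $p$, and close the loop via a Lie algebroid isomorphism between $A|_U$ and $p^{!}\iota^{!}(A|_U)$ that restricts to the identity over $N$. The paper obtains that isomorphism by directly invoking the normal form theorem of Bursztyn--Lima--Meinrenken \cite{BLM}, which produces an isomorphism $(\Phi,\phi)\colon p^{!}i^{!}(A)\diffto A|_{\phi(U)}$ making the triangle with the zero-section and inclusion morphisms commute; the three-line computation $i^{*}\operatorname{char}(A)=z^{*}\operatorname{char}(p^{!}i^{!}(A))=(pz)^{*}\operatorname{char}(i^{!}(A))=\operatorname{char}(i^{!}(A))$ then finishes it.

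Where your write-up diverges is that instead of citing \cite{BLM} you attempt to \emph{reprove} this normal form via the dilation $m(e,t)=t\cdot e$ and a Moser-type flow on $\mathcal{A}=m^{!}(A_U)$. That idea is sound in spirit --- it is essentially the Euler-like vector field method underlying \cite{BLM} --- but as written it is a sketch with real gaps. The completeness issue is not actually resolved by ``working inside the scaled sub-tubes $m_t(U)$'': the flow of your lifted section still has to exist for the full interval $[0,1]$ on some fixed neighbourhood of $N$, and ``$\operatorname{char}$ is built from local data'' does not by itself control where the flow goes. Likewise, the claim that the resulting cohomology isomorphism restricts to the identity over $N$ requires choosing the lift $\alpha$ so that along $N\times[0,1]$ it equals the trivial section $(0,\partial_t)$ in $\operatorname{pr}_N^{!}\iota^{!}(A_U)\cong\iota^{!}(A_U)\times T[0,1]$ and then extending smoothly; you assert this but do not carry it out. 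Your own final sentence --- ``one may also package the last step as the assertion that $p^{!}(\iota^{!}(A_U))$ is isomorphic to $A_U$ through a Lie algebroid isomorphism equal to the identity over $N$'' --- is precisely the statement of the normal form theorem in \cite{BLM}, so the cleanest fix is simply to cite it, as the paper does, rather than redeveloping it.
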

\begin{proof}Let $i\colon X \inc M$ be a closed embedding transverse to $A$, and $p\colon NX:=TM|_X/TX \to X$ the normal bundle to $X$. By the normal form theorem in \cite{BLM}, we can find an open subset $U \subset NX$, and an isomorphism of Lie algebroids $(\Phi,\phi) \colon p^!i^!(A) \diffto A|_{\phi(U)}$, such that the following triangle of morphisms of Lie algebroids commutes
\begin{gather*}
 \xymatrix{
 p^!i^!(A) \ar[rr]^{(\Phi,\phi)}_{\simeq} && A|_{\phi(U)}\\
 & i^!(A), \ar[ul]^{(\widetilde{z},z)} \ar[ur]_{(\widetilde{i},i)} &
 }
\end{gather*}where $\big(\widetilde{i},i\big)$ is the pullback morphism of Lie algebroids induced by the inclusion $i\colon X \inc M$, and $(\widetilde{z},z)$ is the pullback morphism of Lie induced by the zero section $z\colon X \inc U$. Because $(\Phi,\phi)$ is an isomorphism, we have that $\operatorname{char}\big(p^!i^!(A)\big) = (\Phi,\phi)^*\operatorname{char}(A)$, and this implies
\begin{gather*}
 i^*\operatorname{char}(A) = z^*\operatorname{char}\big(p^!i^!(A)\big) =(pz)^*\operatorname{char}\big(i^!(A)\big) = \operatorname{char}\big(i^!(A)\big),
\end{gather*}where in the middle equality we used Proposition~\ref{pro : pullback of char by submersions}.
\end{proof}

\begin{proof}[Proof of the Main Theorem] Let $\phi \colon N \to M$ be a smooth map, and $A$ a Lie algebroid on~$M$. Factor $\phi$ as $\pr_2i$, where
\begin{gather*}
N \stackrel{\pr_1}{\lmap} N \times M \stackrel{\pr_2}{\rmap} M
\end{gather*}denote the canonical projections, and where
\begin{gather*}
 i\colon \ N \rmap N \times M, \qquad i(x):=(x,\phi(x))
\end{gather*}is the embedding of $N$ as the graph of $\phi$. Because $\pr_2$ is a surjective submersion, $\phi$ is transverse to $A$ exactly when $i$ is transverse to $\pr_2^!(A)$. Hence
\begin{align*}
 \phi^*\operatorname{char}(A) = i^*\pr_2^*\operatorname{char}(A) = i^*\operatorname{char}\big(\pr_2^!A\big) = \operatorname{char}\big(i^!\pr_2^!A\big) = \operatorname{char}\big(\phi^!A\big),
\end{align*}where in the second equality we used Proposition~\ref{pro : pullback of char by submersions}, and in the third, Proposition~\ref{pro : pullback of char by inclusion}.
\end{proof}

So everything boils down to

\begin{proof}[Proof of Proposition~\ref{pro : pullback of char by submersions}] Let $p\colon \Sigma \to M$ be a surjective submersion, and $A$ a Lie algebroid on~$M$. Our goal is to show that\begin{gather*}\operatorname{char}\big(p^!(A)\big) = p^*(\operatorname{char}(A)),\end{gather*}and by item b) of Proposition~\ref{pro : secondary}, it suffices to show that\begin{gather*}\operatorname{char}\big(p^!(A)\big)=\mathfrak{u}\big(p^!\big(\nabla^{\mathrm{bas}}\big)\big),\end{gather*}where $\nabla^{\mathrm{bas}}$ is the basic connection associated (in the sense of the Main Example) with some linear connection $\nabla\colon TM \acts A$.

 To do so, it is enough to give a recipe which to a connection $\nabla\colon TM \acts A$ and metrics $g_A$ on~$A$ and $g_M$ on $TM$, assigns a connection $\overline{\nabla}\colon T\Sigma \acts p^!(A)$, and metrics $g_{p^!(A)}$ on $p^!(A)$ and $g_{\Sigma}$ on~$T\Sigma$, such that
\begin{gather}\label{ast-ast}
 \mathrm{cs}\big(\overline{\nabla}^{\mathrm{bas}},\overline{\nabla}^{\mathrm{bas},\overline{g}}\big) = p^*\mathrm{cs}\big(\nabla^{\mathrm{bas}},\nabla^{\mathrm{bas},g}\big),
\end{gather}where $g=(g_{A},g_{M})$ and $\overline{g}=(g_{p^!(A)},g_{\Sigma})$. Our recipe for $\big(\overline{\nabla},g_{p^!(A)},g_{\Sigma}\big)$ will depend on choices of a metric $g_V$ on the vertical bundle $V=\ker p_*$, and an Ehresmann connection $H \subset T\Sigma$ for $p$, all of which we fix once and for all. Denote by $h\colon p^*(TM) \to T\Sigma$ the horizontal lift associated with $H$ and by $\mathbb{V}$ the subbundle $V \oplus V \subset \operatorname{Ad}\big(p^!(A)\big)$.

Consider the exact sequence of vector bundles over $\Sigma$:
\begin{gather*}
0 \rmap \mathbb{V} \rmap \operatorname{Ad}\big(p^!(A)\big) \stackrel{\widetilde{p}}{\rmap} p^*\operatorname{Ad}(A) \rmap 0
\end{gather*}and define
\begin{gather*}
 \mathrm{hor}\colon \ p^*(A) \diffto C \subset p^!(A), \qquad \mathrm{hor}(a):=(h(\varrho_A a),a) \in T\Sigma \times_{TM}A.
\end{gather*}This induces a linear splitting $(\mathrm{hor},h)\colon p^*\operatorname{Ad}(A) \to \operatorname{Ad}\big(p^!(A)\big)$ to the exact sequence above, and we define metrics $g_{\Sigma}$ on $T\Sigma$ and $g_{p^!A}$ on $p^!(A)$ so that
 \begin{alignat*}{3}
 & (V,g_V) \oplus \big(p^*(TM),p^*g_M\big) \rmap (T\Sigma,g_{\Sigma}), \qquad && \big(v,p^{\dagger}(u)\big) \mapsto v+h(u), &\\
 & (V,g_V) \oplus \big(p^*(A),p^*g_A\big) \rmap \big(p^!A,g_{p^!A}\big),\qquad && \big(v,p^{\dagger}(a)\big) \mapsto v+\mathrm{hor}(a) &
 \end{alignat*}be isometries.

 The metric $\overline{g}=(g_{p^!(A)},g_{\Sigma})$ on $\operatorname{Ad}\big(p^!(A)\big)$ is the one in the output of our recipe. The construction of $\overline{\nabla}$ which satisfies \eqref{ast-ast}, on the other hand, is subtler, and proceeds in steps.

{\it Step one.} First consider the Riemannian connection $\nabla^{\mathrm{R}}\colon T\Sigma \acts T\Sigma$ of $g_{\Sigma}$, which satisfies
 \begin{gather*}
 \nabla^{\mathrm{R},\mathrm{bas}} = \nabla^{\mathrm{R}} = \nabla^{\mathrm{R},g_{\Sigma}}.
 \end{gather*}

{\it Step two.} Let the horizontal and vertical projections corresponding to $g_{\Sigma}$ be denoted by $\mathrm{P}_H,\mathrm{P}_V\colon T\Sigma \to T\Sigma $, and define a new connection
 \begin{gather*}
 \nabla^{\Sigma}\colon \ T\Sigma \acts T\Sigma, \qquad \nabla^{\Sigma}_uv:=\mathrm{P}_H\nabla^{\mathrm{R}}_u\mathrm{P}_H(v) + \mathrm{P}_V\nabla^{\mathrm{R}}_u\mathrm{P}_V(v).
 \end{gather*}Note that $V,H \subset T\Sigma$ are subconnections by construction. We claim that $\nabla^{\Sigma}$ is $g_{\Sigma}$-metric, $\nabla^{\Sigma} = \nabla^{\Sigma,g_{\Sigma}}$. Indeed, note that by definition of $\nabla^{\Sigma}$, we have
 \begin{gather*}
 g_{\Sigma}\big(\nabla^{\Sigma}_uv,w\big) = g_{\Sigma}\big(\nabla^{\mathrm{R}}_u\mathrm{P}_Hv,\mathrm{P}_Hw\big) + g_{\Sigma}\big(\nabla^{\mathrm{R}}_u\mathrm{P}_Vv,\mathrm{P}_Vw\big)
 \end{gather*}and because $g_{\Sigma}(V,H)=0$ and $\nabla^{\mathrm{R}}$ is $g_{\Sigma}$-metric,
 \begin{gather*}
 g_{\Sigma}\big(\nabla^{\Sigma}_uv,w\big) + g_{\Sigma}\big(v,\nabla^{\Sigma}_uw\big) = \mathscr{L}_ug_{\Sigma}(\mathrm{P}_Hv,\mathrm{P}_Hw) + \mathscr{L}_ug_{\Sigma}(\mathrm{P}_Vv,\mathrm{P}_Vw) = \mathscr{L}_ug_{\Sigma}(v,w).
 \end{gather*}

{\it Step three.} There exist unique $C^{\8}(\Sigma)$-linear maps
 \begin{gather*}
 \mathcal{D}\colon \ \Gamma(H) \to \operatorname{End}(\Gamma(V)), \qquad \mathcal{E}\colon \ \Gamma(H) \rmap \operatorname{End}(\Gamma(C)),
 \end{gather*}satisfying the Leibniz rule
 \begin{gather*}
 \mathcal{D}_{w}(fv) = f\mathcal{D}_{w}(v) + (\mathscr{L}_{w}f)v, \qquad \mathcal{E}_{w}(f\alpha) = f\mathcal{E}_{w}(\alpha)+(\mathscr{L}_wf)\alpha,
 \end{gather*}for all $f \in C^{\8}(\Sigma)$, $v \in \Gamma(V)$, $w \in \Gamma(H)$ and $\alpha \in \Gamma(C)$, and such that
 \begin{gather*}
 \mathcal{D}_{h(u)}v = [h(u),v], \qquad \mathcal{E}_{h(u)}\mathrm{hor}(a) = \mathrm{hor}(\nabla_ua),
 \end{gather*}for all $u \in \X(M)$ and~$a \in \Gamma(A)$. Concretely, we identify $\Gamma(H)$ with $C^{\8}(\Sigma) \otimes_{C^{\8}(M)} \X(M)$ and~$\Gamma(C)$ with $C^{\8}(\Sigma) \otimes_{C^{\8}(M)} \Gamma(A)$. Then $\mathcal{D}$ is just obtained by extension of scalars $\mathcal{D}_{\lambda h(u)}:=\lambda\mathcal{D}_{h(u)}$. In turn, for each fixed $u \in \X(M)$, the linear map
 \begin{gather*}
 \mathcal{E}_{h(u)}\colon \ \Gamma(A) \rmap \Gamma(C), \qquad \mathcal{E}_{h(u)}(a)=\mathrm{hor}(\nabla_ua)
 \end{gather*}extends to an endomorphism of $\Gamma(C)$ via $\mathcal{E}_{h(u)}(\mu \otimes a):=\mu \mathcal{E}_{h(u)}(a)+(\mathscr{L}_{h(u)}\mu)\hor(a)$, and $\mathcal{E}$ is obtained by extension of scalars: $\mathcal{E}_{\lambda h(u)}:=\lambda \mathcal{E}_{h(u)}$.

{\it Step four.} Let now $\overline{\nabla}\colon T\Sigma \acts p^!(A)$ be the connection which satisfies
 \begin{alignat*}{3}
& a) \ \ \overline{\nabla}_vv':=\nabla^{\Sigma,\mathrm{bas}}_{v}v', \qquad && b) \ \ \overline{\nabla}_{w}v':=\mathcal{D}_w(v'), & \\
& c) \ \ \overline{\nabla}_v\alpha:=\nabla^{\Sigma,\mathrm{bas}}_{v}\varrho_{p^!(A)}\alpha, \qquad && d) \ \ \overline{\nabla}_{w}\alpha:=\mathcal{E}_{w}\alpha+c(w,\varrho_{p^!(A)}\alpha)&
 \end{alignat*} for all $v,v' \in \Gamma(V)$, $w \in \Gamma(H)$ and $\alpha \in \Gamma(C)$, and where $c$ denotes the extension of
 \begin{gather*}
 \X(M) \times \X(M) \rmap \Gamma(V), \qquad (u,u') \mapsto [h(u),h(u')]-h[u,u']
 \end{gather*}to a form $c\in \Gamma(\wedge^2p^*(T^*M)\otimes V)$. This concludes our recipe
 \begin{gather*}
 (\nabla,g_A,g_M) \mapsto \big(\overline{\nabla},g_{p^!(A)},g_{\Sigma}\big)
 \end{gather*}and all there is left to do is to check that \eqref{ast-ast} is satisfied.

 We begin by computing the basic connection $\overline{\nabla}^{\mathrm{bas}}\colon p^!(A) \acts \operatorname{Ad}\big(p^!(A)\big)$:
 \begin{alignat*}{3}
& a) \ \ \overline{\nabla}^{\mathrm{bas}}_vv'=\nabla^{\Sigma}_{v}v', \qquad && b) \ \ \overline{\nabla}^{\mathrm{bas}}_{\mathrm{hor}(a)}v'=\nabla^{\Sigma}_{h(\varrho_Aa)}v', & \\
& c) \ \ \overline{\nabla}^{\mathrm{bas}}_v\mathrm{hor}(b)=0, \qquad && d) \ \ \overline{\nabla}^{\mathrm{bas}}_vh(u)=0, & \\
& e) \ \ \overline{\nabla}^{\mathrm{bas}}_{\mathrm{hor}(a)}\mathrm{hor}(b)=\mathrm{hor}\big(\nabla^{\mathrm{bas}}_{a}b\big), \qquad && f) \ \
\overline{\nabla}^{\mathrm{bas}}_{\mathrm{hor}(a)}h(u)=h\big(\nabla^{\mathrm{bas}}_au\big), &
 \end{alignat*}where $v,v' \in \Gamma(V)$, $u \in \X(M)$ and $a,b \in \Gamma(A)$. In particular, it follows from a) and b) that
 \begin{gather}\label{eq : vertical and horizontal subconnections of basic, 1}
 \overline{\nabla}^{\mathrm{bas}}_{\alpha}\Gamma(\mathbb{V}) \subset \Gamma(\mathbb{V}), \qquad \alpha \in \Gamma\big(p^!(A)\big),
 \end{gather}whereas from c)--f) it follows that
 \begin{gather}\label{eq : vertical and horizontal subconnections of basic, 2}
 \overline{\nabla}^{\mathrm{bas}}_{\alpha}\Gamma(p^*\operatorname{Ad}(A)) \subset \Gamma(p^*\operatorname{Ad}(A)), \qquad \alpha \in \Gamma\big(p^!(A)\big).
 \end{gather}Because $\mathbb{V}$ and $p^*\operatorname{Ad}(A)$ are $\overline{g}$-orthogonal, it follows from (\ref{eq : vertical and horizontal subconnections of basic, 1}), (\ref{eq : vertical and horizontal subconnections of basic, 2}) and the definition of $\overline{g}$-dual connection that
 \begin{gather}\label{eq : vertical and horizontal subconnections of g-dual of basic}
 \overline{\nabla}^{\mathrm{bas},\overline{g}}_{\alpha}\Gamma(\mathbb{V}) \subset \Gamma(\mathbb{V}), \qquad \overline{\nabla}^{\mathrm{bas},\overline{g}}_{\alpha}\Gamma(p^*\operatorname{Ad}(A)) \subset \Gamma(p^*\operatorname{Ad}(A)), \qquad \alpha \in \Gamma\big(p^!(A)\big).
 \end{gather}
The explicit description a)--f) of $\overline{\nabla}^{\mathrm{bas}}$ also implies that $\overline{\nabla}^{\mathrm{bas}}$ restricts to a subconnection $\nabla^{\mathbb{V}}:=\overline{\nabla}^{\mathrm{bas}}|_{\mathbb{V}}\colon p^!(A) \acts \mathbb{V}$, which is $\overline{g}$-metric:
 \begin{gather}\label{eq : vertical is metric}
 \nabla^{\mathbb{V}}=\overline{\nabla}^{\mathrm{bas}}|_{\mathbb{V}} = \overline{\nabla}^{\mathrm{bas},\overline{g}}|_{\mathbb{V}},
 \end{gather}and that for $\alpha,\beta \in \Gamma\big(p^!(A)\big)$, $a,b \in \Gamma(A)$, $w \in \X(\Sigma)$ and $u \in \X(M)$,
\begin{gather}\label{eq : basic connection projects}
 \alpha \sim_p a, \qquad \beta \sim_p b, \qquad w \sim_p u \quad \Longrightarrow \quad \overline{\nabla}^{\mathrm{bas}}_{\alpha}(\beta,w) \sim_p \nabla^{\mathrm{bas}}_{a}(b,u).
\end{gather}We conclude from equations (\ref{eq : vertical and horizontal subconnections of basic, 1}), (\ref{eq : vertical and horizontal subconnections of basic, 2}), (\ref{eq : vertical is metric}) and (\ref{eq : basic connection projects}) that
\begin{gather}\label{eq : basic connection splits}
 \overline{\nabla}^{\mathrm{bas}} = \nabla^{\mathbb{V}} \oplus p^!\big(\nabla^{\mathrm{bas}}\big).
\end{gather}Because $p^*g(b,b')=\overline{g}(\mathrm{hor}(b),\mathrm{hor}(b'))$ and $\overline{\nabla}^{\mathrm{bas}}_v\mathrm{hor}(b)=0$, it follows that
\begin{gather*}
 \overline{\nabla}^{\mathrm{bas},\overline{g}}_v\mathrm{hor}(b) = 0, \qquad v \in \Gamma(V), \qquad b \in \Gamma(A),
\end{gather*}and because $\overline{\nabla}^{\mathrm{bas}}_{\mathrm{hor}(a)}\mathrm{hor}(b) = \mathrm{hor}\big(\nabla^{\mathrm{bas}}_{a}b\big)$, it follows that
\begin{gather*}
 \overline{\nabla}^{\mathrm{bas},\overline{g}}_{\mathrm{hor}(a)}\mathrm{hor}(b) = \mathrm{hor}\big(\nabla^{\mathrm{bas},g}_{a}b\big),
\end{gather*}whence
\begin{gather}\label{eq : g-dual to basic connection projects}
 \alpha \sim_p a, \qquad \beta \sim_p b, \qquad w \sim_p u \quad \Longrightarrow \quad \overline{\nabla}^{\mathrm{bas},\overline{g}}_{\alpha}(\beta,w) \sim_p \nabla^{\mathrm{bas},g}_{a}(b,u).
\end{gather}Equations (\ref{eq : vertical and horizontal subconnections of g-dual of basic}), (\ref{eq : vertical is metric}) and (\ref{eq : g-dual to basic connection projects}) hence imply that
\begin{gather}\label{eq : g-dual to basic connection splits}
 \overline{\nabla}^{\mathrm{bas},\overline{g}} = \nabla^{\mathbb{V}} \oplus p^!\big(\nabla^{\mathrm{bas},g}\big).
\end{gather}

Now form the affine connections
\begin{alignat*}{3}
& \nabla^{\mathrm{aff}} \colon \ A \times T\varDelta^1 \curvearrowright \operatorname{Ad}(A) \times \varDelta^1, \qquad && \nabla^{\mathrm{aff}}=t_0\nabla^{\mathrm{bas}}+t_1\nabla^{\mathrm{bas},g},&\\
& \overline{\nabla}^{\mathrm{aff}} \colon \ p^!(A) \times T\varDelta^1 \curvearrowright \operatorname{Ad}\big(p^!(A)\big) \times \varDelta^1, \qquad && \overline{\nabla}^{\mathrm{aff}} = t_0\overline{\nabla}^{\mathrm{bas}}+t_1 \overline{\nabla}^{\mathrm{bas},\overline{g}}&
\end{alignat*}used respectively to compute $\mathrm{cs}\big(\nabla^{\mathrm{bas}}\!{,}\nabla^{\mathrm{bas},g}\big)$ and $\mathrm{cs}\big(\overline{\nabla}^{\mathrm{bas}}\!{,}\overline{\nabla}^{\mathrm{bas},\overline{g}}\big)$. Then equations (\ref{eq : basic connection splits}) and~(\ref{eq : g-dual to basic connection splits}) imply that
\begin{gather*}
 \overline{\nabla}^{\mathrm{aff}} = \pr^!\big(\nabla^{\mathbb{V}}\big) \oplus (p,\id_{\varDelta^1})^!\big(\nabla^{\mathrm{aff}}\big),
\end{gather*}whence
\begin{gather*}
 \operatorname{Tr}_s\big(R_{\overline{\nabla}^{\mathrm{aff}}}^q\big) =\pr^*\operatorname{Tr}_s\big(R_{\nabla^{\mathbb{V}}}^q\big) + \big(p,\id_{\varDelta^1}\big)^*\operatorname{Tr}_s\big(R_{\nabla^{\mathrm{aff}}}^q\big)
\end{gather*}and so
 \begin{gather*}
 \mathrm{cs}^q\big(\overline{\nabla}^{\mathrm{bas}},\overline{\nabla}^{\mathrm{bas},\overline{g}}\big) = -\mint{-}_{\varDelta^1}\operatorname{Tr}_s\big(R_{\overline{\nabla}^{\mathrm{aff}}}^q\big) = -\mint{-}_{\varDelta^1}\big(p,\id_{\varDelta^1}\big)^*\operatorname{Tr}_s\big(R_{\nabla^{\mathrm{aff}}}^q\big) \\
 \hphantom{\mathrm{cs}^q\big(\overline{\nabla}^{\mathrm{bas}},\overline{\nabla}^{\mathrm{bas},\overline{g}}\big)}{}
 = -p^*\mint{-}_{\varDelta^1}\operatorname{Tr}_s\big(R_{\nabla^{\mathrm{aff}}}^q\big) = p^*\mathrm{cs}\big(\nabla^{\mathrm{bas}},\nabla^{\mathrm{bas},g}\big).
 \end{gather*}This shows that \eqref{ast-ast} holds true, and concludes the proof that $\operatorname{char}\big(p^!(A)\big) = p^*\operatorname{char}(A)$.
 \end{proof}

\subsection*{Acknowledgements}
Work partially supported by the Nederlandse Organisatie voor Wetenschappelijk Onderzoek (Vrije Competitie grant ``Flexibility and Rigidity of Geometric Structures'' 612.001.101) and by IMPA (CAPES-FORTAL project). I would like to thank Ioan M\u{a}rcu\c{t}, Ori Yudilevich, Rui Loja Fernandes, Olivier Brahic and David Mart\'inez-Torres. I am also grateful to the anonymous referees for their many useful comments.

\pdfbookmark[1]{References}{ref}
\LastPageEnding


\begin{thebibliography}{99}
\footnotesize\itemsep=0pt

\bibitem{AbadCrainic}
Abad C.A., Crainic M., Representations up to homotopy of {L}ie algebroids,
 \href{https://doi.org/10.1515/CRELLE.2011.095}{\textit{J.~Reine Angew. Math.}} \textbf{663} (2012), 91--126,
 \href{https://arxiv.org/abs/0911.2859}{arXiv:0911.2859}.

\bibitem{BLM}
Bursztyn H., Lima H., Meinrenken E., Splitting theorems for {P}oisson and
 related structures, \href{https://doi.org/10.1515/crelle-2017-001}{\textit{J.~Reine Angew. Math.}}, {t}o appear,
 \href{https://arxiv.org/abs/1605.05386}{arXiv:1605.05386}.

\bibitem{CaseiroFernandes}
Caseiro R., Fernandes R.L., The modular class of a {P}oisson map, \href{https://doi.org/10.5802/aif.2804}{\textit{Ann.
 Inst. Fourier (Grenoble)}} \textbf{63} (2013), 1285--1329, \href{https://arxiv.org/abs/1103.4305}{arXiv:1103.4305}.

\bibitem{CrainicNL}
Crainic M., Chern characters via nonlinear connections,
 \href{https://arxiv.org/abs/math.DG/0009229}{math.DG/0009229}.

\bibitem{Cr_VEst}
Crainic M., Differentiable and algebroid cohomology, van {E}st isomorphisms,
 and characteristic classes, \href{https://doi.org/10.1007/s00014-001-0766-9}{\textit{Comment. Math. Helv.}} \textbf{78} (2003),
 681--721, \href{https://arxiv.org/abs/math.DG/0008064}{math.DG/0008064}.

\bibitem{CrFer_Char}
Crainic M., Fernandes R.L., Secondary characteristic classes of {L}ie
 algebroids, in Quantum Field Theory and Noncommutative Geometry,
 \href{https://doi.org/10.1007/11342786_9}{\textit{Lecture Notes in Phys.}}, Vol.~662, Springer, Berlin, 2005, 157--176.

\bibitem{PMCT1}
Crainic M., Fernandes R.L., Mart\'{\i}nez~Torres D., Poisson manifolds of
 compact types ({PMCT}~1), \href{https://doi.org/10.1515/crelle-2017-0006}{\textit{J.~Reine Angew. Math.}}, {t}o appear,
 \href{https://arxiv.org/abs/1510.07108}{arXiv:1510.07108}.

\bibitem{FernandesDamianou}
Damianou P.A., Fernandes R.L., Integrable hierarchies and the modular class,
 \href{https://doi.org/10.5802/aif.2346}{\textit{Ann. Inst. Fourier (Grenoble)}} \textbf{58} (2008), 107--137,
 \href{https://arxiv.org/abs/math.DG/0607784}{math.DG/0607784}.

\bibitem{HoyoOrtiz}
del Hoyo M., Ortiz C., Morita equivalences of vector bundles, \href{https://doi.org/10.1093/imrn/rny149}{\textit{Int.
 Math. Res. Not.}}, {t}o appear, \href{https://arxiv.org/abs/1612.09289}{arXiv:1612.09289}.

\bibitem{EvensLuAlan}
Evens S., Lu J.-H., Weinstein A., Transverse measures, the modular class and a
 cohomology pairing for {L}ie algebroids, \href{https://doi.org/10.1093/qjmath/50.200.417}{\textit{Quart.~J. Math. Oxford}}
 \textbf{50} (1999), 417--436, \href{https://arxiv.org/abs/dg-ga/9610008}{dg-ga/9610008}.

\bibitem{RuiAdv}
Fernandes R.L., Lie algebroids, holonomy and characteristic classes,
 \href{https://doi.org/10.1006/aima.2001.2070}{\textit{Adv. Math.}} \textbf{170} (2002), 119--179, \href{https://arxiv.org/abs/math.DG/0007132}{math.DG/0007132}.

\bibitem{PT_Homology}
Frejlich P., M\u{a}rcu\c{t} I., The homology class of a {P}oisson transversal,
 \href{https://doi.org/10.1093/imrn/rny105}{\textit{Int. Math. Res. Not.}}, {t}o appear, \href{https://arxiv.org/abs/1704.04724}{arXiv:1704.04724}.

\bibitem{Ginz01}
Ginzburg V.L., Grothendieck groups of {P}oisson vector bundles,
 \href{https://doi.org/10.4310/JSG.2001.v1.n1.a4}{\textit{J.~Symplectic Geom.}} \textbf{1} (2001), 121--169,
 \href{https://arxiv.org/abs/math.DG/0009124}{math.DG/0009124}.

\bibitem{GinzGol}
Ginzburg V.L., Golubev A., Holonomy on {P}oisson manifolds and the modular
 class, \href{https://doi.org/10.1007/BF02809901}{\textit{Israel J. Math.}} \textbf{122} (2001), 221--242,
 \href{https://arxiv.org/abs/math.DG/9812153}{math.DG/9812153}.

\bibitem{GinzLu}
Ginzburg V.L., Lu J.-H., Poisson cohomology of {M}orita-equivalent {P}oisson
 manifolds, \href{https://doi.org/10.1155/S1073792892000229}{\textit{Int. Math. Res. Not.}} \textbf{1992} (1992), 199--205.

\bibitem{Grabowski}
Grabowski J., Modular classes of skew algebroid relations, \href{https://doi.org/10.1007/s00031-012-9197-2}{\textit{Transform.
 Groups}} \textbf{17} (2012), 989--1010, \href{https://arxiv.org/abs/1108.2366}{arXiv:1108.2366}.

\bibitem{GraciaMehta}
Gracia-Saz A., Mehta R.A., Lie algebroid structures on double vector bundles
 and representation theory of {L}ie algebroids, \href{https://doi.org/10.1016/j.aim.2009.09.010}{\textit{Adv. Math.}}
 \textbf{223} (2010), 1236--1275, \href{https://arxiv.org/abs/0810.0066}{arXiv:0810.0066}.

\bibitem{GMP}
Guillemin V., Miranda E., Pires A.R., Codimension one symplectic foliations and
 regular {P}oisson structures, \href{https://doi.org/10.1007/s00574-011-0031-6}{\textit{Bull. Braz. Math. Soc. (N.S.)}}
 \textbf{42} (2011), 607--623, \href{https://arxiv.org/abs/1009.1175}{arXiv:1009.1175}.

\bibitem{Huebschmann}
Huebschmann J., Duality for {L}ie--{R}inehart algebras and the modular class,
 \href{https://doi.org/10.1515/crll.1999.043}{\textit{J.~Reine Angew. Math.}} \textbf{510} (1999), 103--159,
 \href{https://arxiv.org/abs/dg-ga/9702008}{dg-ga/9702008}.

\bibitem{Yvette_BV}
Kosmann-Schwarzbach Y., Modular vector fields and {B}atalin--{V}ilkovisky
 algebras, in Poisson Geometry ({W}arsaw, 1998), \textit{Banach Center Publ.},
 Vol.~51, Polish Acad. Sci. Inst. Math., Warsaw, 2000, 109--129.

\bibitem{Yvette}
Kosmann-Schwarzbach Y., Poisson manifolds, {L}ie algebroids, modular classes: a
 survey, \href{https://doi.org/10.3842/SIGMA.2008.005}{\textit{SIGMA}} \textbf{4} (2008), 005, 30~pages, \href{https://arxiv.org/abs/0710.3098}{arXiv:0710.3098}.

\bibitem{YvetteLaurentAlan}
Kosmann-Schwarzbach Y., Laurent-Gengoux C., Weinstein A., Modular classes of
 {L}ie algebroid morphisms, \href{https://doi.org/10.1007/s00031-008-9032-y}{\textit{Transform. Groups}} \textbf{13} (2008),
 727--755, \href{https://arxiv.org/abs/0712.3021}{arXiv:0712.3021}.

\bibitem{Yvette_Alan}
Kosmann-Schwarzbach Y., Weinstein A., Relative modular classes of {L}ie
 algebroids, \href{https://doi.org/10.1016/j.crma.2005.09.010}{\textit{C.~R.~Math. Acad. Sci. Paris}} \textbf{341} (2005),
 509--514, \href{https://arxiv.org/abs/math.DG/0508515}{math.DG/0508515}.

\bibitem{Koszul}
Koszul J.-L., Crochet de {S}chouten--{N}ijenhuis et cohomologie,
 \textit{Ast\'{e}risque} (1985), 257--271.

\bibitem{Kubarski_Fibre}
Kubarski J., Fibre integral in regular {L}ie algebroids, in New Developments in
 Differential Geometry ({B}udapest 1996), \href{https://doi.org/10.1007/978-94-011-5276-1_12}{Kluwer Acad. Publ.}, Dordrecht, 1999,
 173--202.

\bibitem{Kubarski}
Kubarski J., The {W}eil algebra and the secondary characteristic homomorphism
 of regular {L}ie algebroids, in Lie Algebroids and Related Topics in
 Differential Geometry ({W}arsaw, 2000), \href{https://doi.org/10.4064/bc54-0-9}{\textit{Banach Center Publ.}},
 Vol.~54, Polish Acad. Sci. Inst. Math., Warsaw, 2001, 135--173.

\bibitem{Kubarski_Mishchenko}
Kubarski J., Mishchenko A., Nondegenerate cohomology pairing for transitive
 {L}ie algebroids, characterization, \href{https://doi.org/10.2478/BF02475971}{\textit{Cent. Eur.~J. Math.}} \textbf{2}
 (2004), 663--707.

\bibitem{Mehta}
Mehta R.A., Lie algebroid modules and representations up to homotopy,
 \href{https://doi.org/10.1016/j.indag.2014.07.013}{\textit{Indag. Math. (N.S.)}} \textbf{25} (2014), 1122--1134,
 \href{https://arxiv.org/abs/1107.1539}{arXiv:1107.1539}.

\bibitem{StienonXu}
Sti\'{e}non M., Xu P., Modular classes of {L}oday algebroids,
 \href{https://doi.org/10.1016/j.crma.2007.12.012}{\textit{C.~R.~Math. Acad. Sci. Paris}} \textbf{346} (2008), 193--198,
 \href{https://arxiv.org/abs/0803.2047}{arXiv:0803.2047}.

\bibitem{Vaisman}
Vaisman I., Characteristic classes of {L}ie algebroid morphisms,
 \href{https://doi.org/10.1016/j.difgeo.2010.01.001}{\textit{Differential Geom. Appl.}} \textbf{28} (2010), 635--647,
 \href{https://arxiv.org/abs/0812.4658}{arXiv:0812.4658}.

\bibitem{Weinstein}
Weinstein A., The modular automorphism group of a {P}oisson manifold,
 \href{https://doi.org/10.1016/S0393-0440(97)80011-3}{\textit{J.~Geom. Phys.}} \textbf{23} (1997), 379--394.

\bibitem{Xu}
Xu P., Gerstenhaber algebras and {BV}-algebras in {P}oisson geometry,
 \href{https://doi.org/10.1007/s002200050540}{\textit{Comm. Math. Phys.}} \textbf{200} (1999), 545--560,
 \href{https://arxiv.org/abs/dg-ga/9703001}{dg-ga/9703001}.

\end{thebibliography}
\end{document}